\newtheorem{theorem}{Theorem}
\newtheorem{corollary}{Corollary}
\newtheorem{proposition}{Proposition}
\newtheorem{definition}{Definition}
\newtheorem{lemma}{Lemma}
\newtheorem{remark}{Remark}
\newcommand{\RR}{\mathbb{R}}
\newcommand{\EEE}{\mathcal{E}}
\newcommand{\be}{\begin{equation}}
\newcommand{\ee}{\end{equation}}
\newcommand{\bea}{\begin{eqnarray}}
\newcommand{\eea}{\end{eqnarray}}
\newcommand{\bean}{\begin{eqnarray*}}
\newcommand{\eean}{\end{eqnarray*}}
\newcommand{\ben}{\begin{equation*}}
\newcommand{\een}{\end{equation*}}
\newcommand{\ea}{\end{aligned}}
\newcommand{\PP}{\textbf{P}}
\newcommand{\origin}{\boldsymbol 0}
\newcommand{\ba}{\boldsymbol a}
\newcommand{\NN}{\mathbb{N}}
\newcommand{\ZZ}{\mathbb{Z}}
\newcommand{\MM}{\mathbb{M}}
\newcommand{\CC}{\mathbb{C}}
\newcommand{\cone}{\mathbb{E}}
\newcommand{\bv}{\textbf{v}}
\newcommand{\bw}{\textbf{w}}
\newcommand{\dd}{\mathrm{d}}
\newcommand{\sid}[1]{{\color{black} #1}}
\newcommand{\wtd}[1]{{\color{black} #1}}
\numberwithin{equation}{section}
\numberwithin{theorem}{section}
\numberwithin{corollary}{section}
\numberwithin{proposition}{section}
\numberwithin{lemma}{section}
\numberwithin{remark}{section}
\numberwithin{figure}{section}
\numberwithin{table}{section}
\begin{document}
\title{\sc Multivariate Regular Variation of Discrete Mass Functions with Applications to Preferential Attachment Networks}
\author{\sc Tiandong Wang and Sidney I. Resnick \thanks{\sid{Tiandong Wang
    and S. Resnick} were supported by Army MURI grant W911NF-12-1-0385 to Cornell University.}}

\maketitle

\begin{abstract}
Regular variation of a \sid{multivariate} measure \sid{with a Lebesgue density }
implies the regular variation of its density \sid{provided} the density satisfies
some regularity conditions. 
\sid{Unlike the univariate case,}
the converse \sid{also requires} regularity conditions. 
We extend these arguments to discrete mass functions and their
associated measures \sid{using the concept} that the 
the mass function can be embedded
in a continuous density function. We  give two different
conditions, monotonicity and \wtd{convergence on the unit sphere}, both of which can make the
discrete function embeddable. \sid{Our results are} then applied to
the preferential attachment network model, and we conclude that the
joint mass function of in- and out-degree is embeddable and thus
regularly varying. 
\end{abstract}
\bigskip{\bf Keywords:} Multivariate regular variation, preferential
attachment, random graphs, power laws, in-degree, out-degree.
\section{Introduction}
The influence of heavy tailed modeling methods has spread to many
fields. Application areas for the modeling and statistical methods
include finance \cite{Smith2003}, insurance
\cite{EKM1997},
 social networks and random graphs
 \cite{Durrett2010,BBCR03,RS15,SRTDWW16}, mobility modeling for
 wireless phone users \cite{KKS2015}, parallel processing queueing
 models of cloud computing \cite{JTWST2013}, models to optimize power
 usage when a mobile user changes between wifi and mobile networks
 \cite{KLS2014}. 

The theory of regular variation is \sid{an} essential
\sid{mathematical tool}
in the analysis of heavy
tailed \sid{phenomena}.  
A measurable function $f$ is regularly varying with index $\alpha > 0$ (written $f\in RV_\alpha$) if $f : \RR_+ \mapsto\RR_+ $ satisfies 
\[\lim_{t\rightarrow\infty} \frac{f(tx)}{f(t)} = x^\alpha,\quad\mbox{ for } x > 0.\] 
In the univariate case, Karamata's theorem gives the asymptotic
properties of the \sid{indefinite} integral; differentiation of a \sid{regularly varying
integral to recover the density function is covered by the monotone
density theorem}; see \cite[Chapter 2.3]{resnickbook:2007}
and \cite[page 38]{BGT1987}.
\sid{Roughly}, if the
derivative of $f$ is regularly varying with index $\alpha$, then $f$
 is regularly varying with index $\alpha+1$. Conversely, \sid{when $f$ is}
 monotone, the
derivative of $f\in RV_{\alpha}$ is regularly varying with index
$\alpha-1$. \sid{These results can be interpreted as relating regular
variation properties of an absolutely continuous  measure and the
density of the measure.}

\sid{In practice, collected data are often multidimensional which raises the
issue of generalizing the one-dimensional theory relating an integral
and derivative to higher dimensions. Unfortunately additional
regularity conditions are indispensible for the generalizations.}  This is
discussed in a series of
publications \cite{dHR1979, dHOR1984, dHR1987,dHO1984,O1989}.
\sid{An additional issue is that in the era of Internet and social network
studies, many data sets are discrete. 
For example, when studying the growth of social networks,
the in- and out-degrees of nodes exhibit power laws.} Under such
circumstances, ignoring the discrete essence of the data is not
appropriate \sid{and we need to understand the relationship between regular
variation properties of a discretely supported 
measure and its mass function.}

\sid{We proceed using the idea of {\it embedding\/} a function of a
discrete variable in a function of a continuous variable.
In one dimension (\cite{BS1973}, \cite[Theorem 1.9.5]{BGT1987}),
a  regularly varying
sequence can always be embedded in a regularly varying function of a
continous variable.}
 The
embeddability property in \sid{the} multivariate case is not as obvious because
the definition of regular variation exerts no control over the
function's variation when we move from ray to ray \sid{and a}dditional
conditions are required for embeddability. Once this is resolved the
results in the continuous cases can be used to examine the relationship
between the regular variation of a mass function and its associated
measure. 

We  apply our results to the preferential attachment network model
studied in \cite{BBCR03, krapivsky:redner:2001,SRTDWW16,RS15,wang:resnick:2015} where a new node
attaches to an existing node or new edges 
are created according to probabilistic postulates that take into
account the current in- and out-degrees of the existing nodes.
\sid{The joint asymptotic distribution of in- and out-degree
has multivariate regularly varying tails \cite{RS15,SRTDWW16}}.
We check the embeddability
conditions for the joint mass function of in- and out-degree and
conclude that the mass 
function is also regularly varying.

This paper is organized as follows. We start with a brief overview of
the multivariate regular variation of both measures and functions in 
Section~\ref{prelim} and then state the univariate embeddability
results in Section~\ref{onedim}. We describe 
the preferential attachment model in
Section~\ref{model}. Sections~\ref{monot} and \ref{unif} provide two
different conditions to establish embeddability in the bivariate case
and also characterize the relationship between the regular variation
of a mass function and its measure. We then apply our results to the
preferential attachment model in Sections~\ref{appl1} and \ref{appl2},
verifying that the joint mass function of in- and out-degrees is also
regularly varying.

\section{Preliminaries}
\subsection{Multivariate Regular Variation}\label{prelim}
We briefly review the basic concepts of multivariate regular variation
for measures with emphasis on the two dimensional case. We use 
$\MM$-convergence to define regular variation instead of
the traditional way of using vague convergence. See \cite{HL2006,das:mitra:resnick:2013,  LRR2014} for the details on 
$\MM$-convergence and \sid{reasons for its use.  }

Consider $\RR_+^2$ metrized by a  convenient metric $d(x,y)$. A subset $\CC\subseteq\RR_+^2$ is a cone if it is closed under positive scalar multiplication: if $\bv\in\CC$ then $c\bv\in\CC$ for $c > 0$. A proper framework for discussing regular variation is measure convergence on a closed cone $\CC\subseteq\RR_+^2$ with a closed cone $\CC_0\subseteq\CC$ deleted. $\CC_0$ is called the \emph{forbidden zone}. In this paper we are interested in the case where $\CC=\RR_+^2$ and $\CC_0=\{\origin\}$. Then $\cone=\RR_+^2\setminus\{\origin\}$ is the space for defining $\MM$-convergence appropriate for regular variation of distributions of positive random vectors. The forbidden zone is the origin $\{\origin\}$.

Let $\MM(\CC\setminus\CC_0)$ be the set of Borel measures on
$\CC\setminus\CC_0$ which are finite on sets bounded away from the
forbidden zone $\CC_0$. We now give the definition of
$\MM$-convergence which becomes the basis for our definition of
multivariate regular variation of measures. 

\begin{definition}
For $\mu_n,\mu\in\MM(\CC\setminus\CC_0)$ we say $\mu_n\rightarrow\mu$ in $\MM(\CC\setminus\CC_0)$ if $\int f\dd\mu_n\rightarrow\int f\dd \mu$ for all bounded, continuous, non-negative $f$ on $\CC\setminus\CC_0$ whose support is bounded away from $\CC_0$.
\end{definition}

A random vector $(X,Y)\ge 0$ is non-standard regularly varying on $\CC\setminus \CC_0$ if there exists $b_1(t)\in RV_{1/\alpha_1}$ and $b_2(t)\in RV_{1/\alpha_2}$ ($\alpha_1,\alpha_2>0$), called the scaling functions, and a measure $\nu(\cdot) \in\MM(\CC\setminus \CC_0)$, called the limit or tail measure, such that as $t\rightarrow\infty$,
\[
t\PP\left[\left(\frac{X}{b_1(t)},\frac{Y}{b_2(t)}\right)\in\cdot\right]\rightarrow \nu(\cdot),\quad \mbox{in }\MM(\CC\setminus \CC_0).
\]
When $b_1(t)=b_2(t)$, $(X,Y)$ is said to have a distribution with
\emph{standard} regularly varying tails with index
$\alpha:=\alpha_1=\alpha_2$ and the limiting measure $\nu$ satisfies
the scaling property: $\nu(c\cdot)=c^{-\alpha}\nu(\cdot)$ for
$c>0$. \sid{Without loss of generality,} we assume  all \sid{scaling} functions are continuous and strictly increasing.

Following the definition in \cite{dHOR1984},
 we say a measurable function $f:\RR_+^2\mapsto \RR_+$ is
 \emph{multivariate regularly varying with scaling functions $b_1$ and
   $b_2$ and limit function $\lambda$}, if there exists
 $h:(0,\infty)\mapsto (0,\infty)$ with
 $h\in RV_{\alpha}$ for some $\alpha\in\RR$ such that 
\be\label{rvSid}
\lim_{t\rightarrow\infty}\frac{f(b_1(t)x,b_2(t)y)}{h(t)}=\lambda(x,y)\sid{>0,\quad
\forall x,y>0.}\ee
If both $b_1$ and $b_2$ are \sid{the} identity function, we get ordinary
regular variation as in \cite{dHR1979}.

\subsection{\sid{Regularly Varying Functions of Discrete Variables} and  Embeddability}\label{onedim}
Internet modeling and social network studies require many relevant
variables to be discrete. A natural example is in- and out-degree of nodes in a
random graph. So it is useful to examine regular variation for
functions of discrete variables.
\sid{For}  the one dimensional case, see \cite{BS1973,BGT1987}.
\begin{definition} 
A sequence $(c_n)_{n\in \NN}$ of positive numbers  is regularly varying with index $\alpha\in\RR$ if 
\be\label{rvseq}
\lim_{n\to\infty}
c_{[nx]}/c_n = x^\alpha>0,\quad x>0.
\ee 
\sid{A doubly indexed function
  $k:\ZZ^2\setminus\{\origin\}\mapsto\RR_+$ is regularly varying with
  scaling functions $b_1$ and $b_2$ and limit function $\lambda(x,y)$
if  for some  $h\in RV_\alpha$ for some $\alpha\in\RR$, $b_i\in
RV_{\beta_i},$ $\beta_i>0$, we have}
\be\label{rvfct}
\lim_{n\rightarrow\infty}\frac{k([b_1(n)x],[b_2(n)y])}{h(n)}=\lambda(x,y)>0,
\quad \forall x,y>0.\ee
\end{definition}
\sid{In one dimension, a regularly varying sequence can always be embedded
in a regularly varying function of a continuous argument} 
\cite[Theorem 1.9.5]{BGT1987}:
\begin{theorem}
 \label{BGT}
If $(c_n)$ is regularly varying in the sense of \eqref{rvseq}, then the function
\begin{equation}\label{emb}
f(x):=c_{[x]},\sid{\quad x>0,}
\end{equation}
varies regularly with index $\alpha$, where $[x]$ denotes the largest integer smaller than or equal to $x$.
\end{theorem}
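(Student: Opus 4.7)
The plan is to verify the continuous-argument regular variation condition directly for $f(x) := c_{[x]}$. Fix $x>0$; we must show $f(tx)/f(t) \to x^\alpha$ as $t \to \infty$. Writing $n = n(t) := [t]$, note that $n \to \infty$ and
\[
\frac{f(tx)}{f(t)} = \frac{c_{[tx]}}{c_n}.
\]
Since $[tx]$ is itself a nonnegative integer, the idea is to re-express it in ``sequence-evaluation'' form $[n y_t]$ by setting $y_t := [tx]/n$, so that $[n y_t] = [tx]$. Using $[tx]/t \to x$ and $[t]/t \to 1$, we obtain $y_t \to x$.

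This reduces the theorem to the assertion that whenever $y_t \to x > 0$,
\[
\frac{c_{[n y_t]}}{c_n} \longrightarrow x^\alpha,
\]
which is a uniform-convergence upgrade of the pointwise definition \eqref{rvseq}: the hypothesis yields $c_{[ny]}/c_n \to y^\alpha$ for each fixed $y > 0$, and we need the limit to persist along the varying arguments $y_t$ tending to $x$.

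The main obstacle is establishing this uniform convergence on compact subsets of $(0,\infty)$ without imposing any monotonicity on $(c_n)$. My plan is to follow the classical Heiberg--Seneta/Steinhaus route used in the proof of the uniform convergence theorem for regularly varying functions. Since $f(x) = c_{[x]}$ is a step function, it is automatically measurable and locally bounded on $(0,\infty)$, and by hypothesis the ratio $f(ty)/f(t)$ converges pointwise to $y^\alpha$ for every $y>0$. A standard Egorov/Baire-category argument then promotes this pointwise convergence to uniform convergence on compact subsets of $(0,\infty)$. Once uniform convergence is in hand, choosing any compact neighbourhood of $x$ inside $(0,\infty)$ and observing that $y_t$ eventually lies inside it yields $c_{[ny_t]}/c_n \to x^\alpha > 0$, which is precisely the regular variation of $f$ with index $\alpha$ that we set out to prove.
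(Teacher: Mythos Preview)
The paper does not prove this theorem; it simply quotes it from \cite[Theorem~1.9.5]{BGT1987}. So there is no in-paper proof to compare against, and your task is really to reproduce the Bojani\'c--Seneta argument.

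Your reduction is fine up to the point where you need uniform convergence of $c_{[ny]}/c_n\to y^\alpha$ on compacta in~$y$. The gap is in how you propose to get it. You write: ``by hypothesis the ratio $f(ty)/f(t)$ converges pointwise to $y^\alpha$ for every $y>0$,'' and then invoke the uniform convergence theorem for regularly varying functions. But that statement is \emph{not} the hypothesis---it is exactly the conclusion of the theorem. The hypothesis \eqref{rvseq} gives $f(ny)/f(n)\to y^\alpha$ only for \emph{integer} $n\to\infty$, not for real $t\to\infty$. The standard Steinhaus/Egorov proof of the UCT needs $t$ to range over an interval (so that the ``good'' set of $y$'s has positive Lebesgue measure and one can use $A\cdot A^{-1}\supset$ neighbourhood of~$1$); it does not apply directly when $t$ is restricted to the integers, and invoking it for $f$ assumes what you are trying to prove.

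What actually has to be done is a discrete adaptation: either run a Steinhaus-type argument tailored to integer $n$ (this is delicate and is essentially what Bojani\'c--Seneta carry out), or first prove the key consequence $c_{n+1}/c_n\to 1$ by an independent argument and then bootstrap. Once $c_{n+1}/c_n\to 1$ is known, your decomposition $c_{[tx]}/c_{[t]}$ with $n=[t]$ and $y_t=[tx]/n\to x$ goes through easily, since the effect of replacing $t$ by $[t]$ is controlled by finitely many of those ratios. As written, though, the proposal is circular at its crucial step.
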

So  every regularly varying sequence is
\emph{embeddable} meaning that there exists a regularly varying
function $f:\mathbb{R}_+\mapsto \mathbb{R}_+$ such that $c_n=f(n)$,
and  \eqref{emb} implies $c_{n-1}/c_n\rightarrow 1$ as $n\rightarrow\infty$. 
\sid{To extend}  embeddability to the multivariabe function $k(i,j)$,  we need to
make additional assumptions on the 
behavior of the function $k(i,j)$. Similar to \eqref{rvseq}, when $k$
satisfies \eqref{rvfct},
 we say 
$k(i,j)$ is \emph{embeddable} if there exists a bivariate
\sid{regularly varying} function $g(x,y)$ \sid{satisfying \eqref{rvSid}}
 such that $g(x,y):=k([x],[y])$.

\subsection{Preferential Attachment Network Models}\label{model}
The directed \sid{edge} preferential attachment model
\cite{krapivsky:redner:2001,BBCR03}
 is a model
for a growing directed random graph. The model evolves according to
\sid{certain rules.}
Choose strictly positive parameters $\alpha,\beta, \gamma,\lambda,\mu$
such that $\alpha+\beta+\gamma=1$, and additionly assume 
that $\alpha,\beta,\gamma<1$ to avoid trivial cases. 
\sid{The initial condition for the model is} a  finite directed graph,
denoted by $G(n_0)$, with at least one node and $n_0$ edges.  
For $n=n_0+1, n_0+2,\ldots$, $G(n)$ is a graph with $n$ edges and a
random number $N(n)$ of nodes. 
If a node $v$ is from $G(n)$, use $D_{in}(v)$ and $D_{out}(v)$ to denote its
in and out degree respectively (dependence on $n$ is suppressed). Then $G(n+1)$ is obtained from $G(n)$
as follows. 
\begin{enumerate}
\item[(i)] With probability $\alpha$ a new node $w$ is born and we add an edge leading from $w$ to an existing node $v\in G(n)$ (written as $w\mapsto v$). The existing node $v$ is chosen with probability according to its in-degree:
\[\PP(v\in G(n)\mbox{ is chosen})=\frac{D_{in}(v)+\lambda}{n+\lambda N(n)}.\]
\item[(ii)] With probability $\beta$ we add a directed edge $v\mapsto
  w$ between two existing nodes $v,w\in G(n)$. Nodes $v$ and $w$ are chosen
  independently from all the nodes of $G(n)$ with probabilities 
\[\PP(v\mbox{ is chosen})=\frac{D_{out}(v)+\mu}{n+\mu N(n)},\mbox{ and }
\PP(w\mbox{ is chosen})=\frac{D_{in}(w)+\lambda}{n+\lambda N(n)}.
\]
\item[(iii)] With probability $\gamma$ a new node $w$ is born and we add an edge leading from an existing node $v\in G(n)$ to $w$. The existing node $v$ is chosen with probability according to its out-degree:
\[\PP(v\in G(n)\mbox{ is
  chosen})=\frac{D_{out}(v)+\mu}{n+\mu N(n)}.
  \]
\end{enumerate}

For $i,j=0,1,2,\ldots$ and $n\ge n_0$, let $N_{ij}(n)$ be the random
number of nodes in $G(n)$ with in-degree $i$ and out-degree
$j$. According to \cite[Theorem~3.2]{BBCR03}, there exist non-random
constants $p(i,j)$ such that 
\be\label{pij}
\lim_{n\rightarrow\infty}\frac{N_{ij}(n)}{N(n)}=p(i,j)\quad\mbox{a.s. for } i,j=0,1,2,\ldots.
\ee
\sid{Define two random
  variables $(I,O)$ such that 
$$P[I=i, O=j]=p(i,j),\quad i,j=0,1,2,\dots$$
and the distribution generated  by $(I,O)$ is a non-standard regularly varying
measure \cite{RS15,SRTDWW16}.}
\sid{The pair $(I,O)$ has representation}
\be\label{jointIO}
(I,O)\stackrel{d}{=}B(1+X_1,Y_1)+(1-B)(X_2,1+Y_2),
\ee
where $B$ is a Bernoulli switching variable independent of $X_j, Y_j$, $j=1,2$ with
\[\PP(B=1)=1-\PP(B=0)=\frac{\gamma}{\alpha+\gamma}.\]
Let $T_\delta(p)$ be a negative binomial integer valued random variable with parameters $\delta>0$ and $p\in(0,1)$.
Now suppose $\{T_{\delta_1}(p), p\in(0,1)\}$ and
$\{\tilde{T}_{\delta_2}(p), p\in(0,1)\}$ are two independent families
of negative binomial random variables and  define
\begin{align*}
c_1=\frac{\alpha+\beta}{1+\lambda(\alpha+\gamma)},\quad c_2=\frac{\beta+\gamma}{1+\mu(\alpha+\gamma)}
\quad\mbox{and } a=c_2/c_1.
\end{align*}
By \wtd{\cite[Theorem~2]{SRTDWW16}}, $X_j, Y_j$, $j=1,2$ in
\eqref{jointIO} can be written as 
\be\label{rep}
(X_1,Y_1)=(T_{\lambda+1}(Z^{-1}),\tilde{T}_\mu(Z^{-a})), \mbox{ and
}(X_2,Y_2)=(T_{\lambda}(Z^{-1}),\tilde{T}_{\mu+1}(Z^{-a})),
\ee
where $Z$ is a Pareto random variable on $[1,\infty)$ with index
$c_1^{-1}$, independent of the negative binomial random variables. 

We
will show that  $p(i,j)$  in \eqref{pij} is regularly
varying. 

\section{Embeddability and  Monotonicity.}\label{monot}
The embeddability problem is no longer \sid{as straightforward in
  higher dimensions as Theorem \ref{BGT} would lead us to believe;}
here we only deal
with the case where $d=2$.  Multivariate regular variation provides 
 no control over the function's variation when we move from ray to
ray and in order to obtain embeddability in the bivariate
case we need regularity conditions.
The following theorem provides one approach.

\begin{theorem}[Standard case]\label{emb1}
Suppose $u:\ZZ_+^2\setminus\{\origin\}\mapsto\RR_+$ is an eventually
decreasing (in both arguments) mass function 
that is regularly varying: There exists
\wtd{$h(\cdot)\in\text{RV}_{\rho}$, $\rho<0$} such that, 
\begin{equation}\label{dmrv}
\lim_{n\rightarrow\infty}\frac{u([nx],[ny])}{h(n)}=\lambda(x,y)>0,
\quad \forall x,y>0.
\end{equation} 
Then the function
 $g(x,y):=u([x],[y])$
is eventually decreasing and regularly varying,
\begin{equation}\label{cmrv}
\lim_{t\rightarrow\infty}\frac{g(tx,ty)}{h(t)}=\lambda(x,y),  \quad  \forall x,y>0,
\end{equation}
so that $u$ is embeddable in the regularly varying function $g$.
\end{theorem}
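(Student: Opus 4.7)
The plan is to use monotonicity to sandwich the continuous parameter $t$ between the consecutive integers $n=[t]$ and $n+1$, reducing the continuous-argument conclusion \eqref{cmrv} to the discrete-argument hypothesis \eqref{dmrv}. The eventual monotonicity of $g(x,y)=u([x],[y])$ is immediate because the floor is monotone, so the real work is proving the regular variation claim.

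I would fix $x,y>0$, take a large real $t$, and set $n=[t]$ so that $n\le t<n+1$. Multiplication by $x$ (resp.\ $y$) and the floor both preserve order, so
\[
[nx]\le [tx]\le [(n+1)x] \quad\text{and}\quad [ny]\le [ty]\le [(n+1)y].
\]
For $n$ sufficiently large every one of these integer indices exceeds the threshold past which $u$ is non-increasing in each argument, so joint monotonicity of $u$ yields
\[
u\bigl([(n+1)x],[(n+1)y]\bigr)\;\le\;u([tx],[ty])\;\le\;u\bigl([nx],[ny]\bigr).
\]

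Dividing through by $h(t)$ and regrouping, the ratio in \eqref{cmrv} gets sandwiched as
\[
\frac{u([(n+1)x],[(n+1)y])}{h(n+1)}\cdot\frac{h(n+1)}{h(t)}\;\le\;\frac{g(tx,ty)}{h(t)}\;\le\;\frac{u([nx],[ny])}{h(n)}\cdot\frac{h(n)}{h(t)}.
\]
As $t\to\infty$ one has $n/t\to 1$ and $(n+1)/t\to 1$, so the uniform convergence theorem for regularly varying functions forces $h(n)/h(t)\to 1$ and $h(n+1)/h(t)\to 1$, while \eqref{dmrv} forces both discrete ratios to converge to $\lambda(x,y)$. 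Both sandwich bounds then share the common limit $\lambda(x,y)$, delivering \eqref{cmrv}. The only point requiring any care is checking that the sandwich indices eventually exceed the monotonicity threshold of $u$, which is automatic because $[nx]\ge nx-1\to\infty$ for every fixed $x>0$. Notably, this route bypasses any continuity requirement on $\lambda$, since both sandwich bounds evaluate $\lambda$ at the same point $(x,y)$---the gap between $n$ and $n+1$ produces only $O(1)$ additive perturbations in the arguments of $u$, which vanish asymptotically.
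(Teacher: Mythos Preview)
Your proof is correct and takes essentially the same sandwich approach as the paper: both bound $u([tx],[ty])$ between $u([[t]x],[[t]y])$ and $u([([t]+1)x],[([t]+1)y])$ via eventual monotonicity, then use \eqref{dmrv} together with $h([t])/h(t)\to 1$ and $h([t]+1)/h(t)\to 1$ to squeeze. The paper additionally establishes the scaling identity $\lambda(sx,sy)=s^{\rho}\lambda(x,y)$ as a preliminary, but this is not needed for \eqref{cmrv} itself and your argument stands without it.
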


\begin{remark}\label{rmk1}
{\rm \sid{In particular, if a probability mass function (pmf) $p(i,j)$ plays
 the role of $u$ and satisfies
 monotonicity and  \eqref{dmrv},
then we can embed $p(i,j)$ in a monotone regularly varying probability density
function (pdf) $f(x,y)$ by simply assigning constant  
probability density $p([x],[y])$ over the rectangle $[[x],[x+1]]\times [[y],[y]+1]$.}}
\end{remark}

\begin{proof}
For $s>0$, we  show that 
the limiting
function $\lambda$ in \eqref{dmrv}  satisfies 
$\lambda(sx,sy)=\wtd{s^\rho}\lambda(x,y)$ as is the case for any
function satisfying \eqref{dmrv} and embeddability.
For  fixed $s>0$, \eqref{dmrv} gives
\[\lim_{n\rightarrow\infty}\frac{u([nsx],[nsy])}{h(n)}=\lambda(sx,sy).\]
We rewrite the left hand side as
\be\label{decomp}
\frac{u([nsx],[nsy])}{h(n)}=\frac{u([nsx],[nsy])}{u([[ns]x],[[ns]y])}\times\frac{u([[ns]x],[[ns]y])}{h([ns])}\times\frac{h([ns])}{h(n)}.
\ee
The second term on the right hand side of \eqref{decomp} converges to $\lambda(x,y)$ as $n\rightarrow\infty$, by \eqref{dmrv}, and as $h\in RV_\wtd{\rho}$, the third term converges to $s^\wtd{\rho}$. It now suffices to show 
\be\label{embalt}
\lim_{n\rightarrow\infty}\frac{u([nsx],[nsy])}{u([[ns]x],[[ns]y])}=1.
\ee
Since  $u$ is eventually decreasing,  for $x>0,\,y>0$ and $n$ large
enough $u([nx],[ny])$ is decreasing in both $x$ and $y$.  
Hence,
\be\label{pee}
\frac{u([([ns]+1)x],[([ns]+1)y])}{u([[ns]x],[[ns]y])}\le\frac{u([nsx],[nsy])}{u([[ns]x],[[ns]y])}\le
1,\ee 
and 
\begin{align}
\frac{u([([ns]+1)x],[([ns]+1)y])}{u([[ns]x],[[ns]y])}
&=
  \frac{u([([ns]+1)x],[([ns]+1)y])}{h([ns]+1)}\frac{h([ns]+1)}{h([ns])}\frac{h([ns])}{u([[ns]x],[[ns]y])}
  \label{extrapee}\\ 
& \stackrel{n\rightarrow\infty}{\rightarrow} \lambda(x,y)\cdot 1\cdot \frac{1}{\lambda(x,y)}=1,\nonumber
\end{align}
which coupled with \eqref{pee} shows \eqref{embalt}. Set $g(x,y)=u([x],[y])$ and 
on the one hand, as $t\to\infty$,
\begin{align*}
\frac{g(tx,ty)}{h(t)} \sim & \frac{u([tx],[ty])}{h([t])} \leq
  \frac{u([[t]x],[[t]y])}{h([t])} \\
\to& \lambda(x,y),\\
\intertext{and on the other, since $([t]+1)x\geq tx,$}
  \frac{g(tx,ty)}{h(t)}\geq &                              \frac{g( ([t]+1)x,([t]+1)y)}{h(t)}\\
=&                              \frac{u( ([[t]+1)x],[([t]+1)y)]}{h(t)},\\
\end{align*}
and the result follows from \eqref{extrapee}.
\end{proof}

We  generalize  Theorem~\ref{emb1} to the non-standard case.
\begin{corollary}[Non-standard case]\label{emb1cor}
Suppose $u:\ZZ_+^2\setminus\{\origin\}\mapsto\RR_+$ is \sid{a mass
  function that is eventually decreasing in both arguments.}  \sid{For 
$\alpha_1,\alpha_2>0$,} let
 $b_1\in RV_{1/\alpha_1}$ and $b_2\in RV_{1/\alpha_2}$ 
be two strictly increasing scaling functions such that 
\begin{equation}\label{dmrvns}
\lim_{n\rightarrow\infty}\frac{u([b_1(n)x],[b_2(n)y])}{h(n)}=\lambda(x,y)>0,
\quad \forall x,y>0,
\end{equation} 
where \wtd{$h(\cdot)\in\text{RV}_\rho$, $\rho<0$}. 
The limit function
$\lambda$ satisfies the scaling property:  
$\lambda(s^{1/\alpha_1}x,s^{1/\alpha_2}y)=s^\wtd{\rho}\lambda(x,y)$
for all $s,x,y>0$ and 
$g(x,y)=u([x],[y])$ is
 an eventually decreasing regularly varying function
such that  and for all $x,y>0$ 
\begin{equation}\label{cmrvns}
\lim_{t\rightarrow\infty}\frac{g(b_1(t)x,b_2(t)y)}{h(t)}=\lambda(x,y)>0.
\end{equation}
Thus $u$ is embeddable in \sid{the} regularly varying function $g$.
\end{corollary}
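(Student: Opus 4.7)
The plan is to mimic the two-stage structure of the proof of Theorem~\ref{emb1}: first obtain the embeddability identity \eqref{cmrvns} by sandwiching $g(b_1(t)x,b_2(t)y)$ between $u$-values at neighboring integer indices, then extract the scaling property of $\lambda$ from \eqref{cmrvns} combined with regular variation of $b_1$, $b_2$, and $h$.

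For \eqref{cmrvns}, set $g(x,y) = u([x],[y])$. Since $b_1,b_2$ are strictly increasing and continuous, $b_i([t]) \le b_i(t) \le b_i([t]+1)$ for $t > 0$, and hence $[b_i([t])x] \le [b_i(t)x] \le [b_i([t]+1)x]$ for $x > 0$. Eventual bivariate monotonicity of $u$ yields
\[
u([b_1([t]+1)x],[b_2([t]+1)y]) \;\le\; g(b_1(t)x, b_2(t)y) \;\le\; u([b_1([t])x],[b_2([t])y]).
\]
Dividing by $h(t)$ and invoking $h([t])/h(t)\to 1$, $h([t]+1)/h(t)\to 1$ together with \eqref{dmrvns} applied at $n=[t]$ and $n=[t]+1$, both bounds converge to $\lambda(x,y)$, establishing \eqref{cmrvns}.

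For the scaling property, fix $s,x,y>0$. Evaluating \eqref{cmrvns} along $t\mapsto ts$ and using $h(ts)/h(t)\to s^\rho$ gives
\[
\lim_{t\to\infty}\frac{g(b_1(ts)x,b_2(ts)y)}{h(t)} = s^\rho\lambda(x,y).
\]
Since $b_i(ts)/b_i(t)\to s^{1/\alpha_i}$, for any $\epsilon>0$ and $t$ sufficiently large, $b_i(ts)\in[(1-\epsilon)s^{1/\alpha_i}b_i(t),(1+\epsilon)s^{1/\alpha_i}b_i(t)]$. A second sandwich using monotonicity of $g$, followed by division by $h(t)$ and \eqref{cmrvns} applied with arguments $(1\pm\epsilon)s^{1/\alpha_i}\cdot$ in place of the original ones, gives
\[
\lambda((1+\epsilon)s^{1/\alpha_1}x,(1+\epsilon)s^{1/\alpha_2}y) \;\le\; s^\rho\lambda(x,y) \;\le\; \lambda((1-\epsilon)s^{1/\alpha_1}x,(1-\epsilon)s^{1/\alpha_2}y).
\]
Letting $\epsilon\downarrow 0$ using monotonicity of $\lambda$ (inherited from $g$ as a pointwise limit) closes the sandwich to $\lambda(s^{1/\alpha_1}x,s^{1/\alpha_2}y)=s^\rho\lambda(x,y)$.

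The main technical obstacle is the final $\epsilon\downarrow 0$ step. In the standard case of Theorem~\ref{emb1}, the sandwich \eqref{pee} uses only consecutive integer sample points, so \eqref{dmrv} applies directly and both bounds have the exact target limit. Here, the mismatched indices of $b_1$ and $b_2$ force the sandwich to pass through $\lambda$ at perturbed arguments, and closing the $\epsilon$-gap hinges on $\lambda$ being sufficiently regular at $(s^{1/\alpha_1}x,s^{1/\alpha_2}y)$. Since $\lambda$ is monotone in each coordinate, its discontinuity set is small, and the scaling equality extends from continuity points to all $(x,y,s)$ by a monotonicity-based approximation.
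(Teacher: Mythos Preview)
Your argument for \eqref{cmrvns} is correct: the sandwich between $u$-values at $n=[t]$ and $n=[t]+1$, combined with \eqref{dmrvns} and $h([t])/h(t)\to 1$, closes exactly as you describe. This is the same monotonicity sandwich idea the paper uses, applied directly rather than after a coordinate change.

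The scaling property is where your route diverges from the paper's, and where a genuine gap appears. Your $\epsilon$-sandwich yields
\[
\lambda\bigl((1+\epsilon)s^{1/\alpha_1}x,(1+\epsilon)s^{1/\alpha_2}y\bigr)\;\le\; s^\rho\lambda(x,y)\;\le\;\lambda\bigl((1-\epsilon)s^{1/\alpha_1}x,(1-\epsilon)s^{1/\alpha_2}y\bigr),
\]
and letting $\epsilon\downarrow 0$ gives only that $s^\rho\lambda(x,y)$ lies between the one-sided radial limits $\lambda_+$ and $\lambda_-$ at the target point. Coordinatewise monotonicity of $\lambda$ does \emph{not} by itself force $\lambda_+=\lambda_-=\lambda$ there, and the closing sentence about extending from continuity points ``by a monotonicity-based approximation'' is not an argument. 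To make your route work you would need either local uniform convergence in \eqref{cmrvns} (which does follow from monotonicity but must be proved) or a direct continuity argument for $\lambda$.

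The paper sidesteps this entirely. It first invokes \cite[Theorem~1.2.2]{O1989} to rewrite \eqref{dmrvns} in standard form,
\[
\lim_{n\to\infty}\frac{u([b_1(nx)],[b_2(ny)])}{h(n)}=\lambda(x^{1/\alpha_1},y^{1/\alpha_2}),
\]
observes that $u([b_1(\cdot)],[b_2(\cdot)])$ is eventually decreasing, and then applies the Theorem~\ref{emb1} argument verbatim. In that standard setting the scaling property $\tilde\lambda(sx,sy)=s^\rho\tilde\lambda(x,y)$ with $\tilde\lambda(x,y)=\lambda(x^{1/\alpha_1},y^{1/\alpha_2})$ falls out of the decomposition \eqref{decomp}--\eqref{extrapee} with no $\epsilon$-perturbation needed, because the sandwich is between consecutive integers and both bounds hit the exact limit. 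Substituting back gives the non-standard scaling relation. A second application of \cite[Theorem~1.2.2]{O1989} then converts the embedded limit back to the form \eqref{cmrvns}. The coordinate change buys exactly the step that your direct argument cannot close cleanly.
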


\begin{proof}
By \cite[Theorem~1.2.2]{O1989}, \eqref{dmrvns} implies that for all $x,y>0$,
\[
\lim_{n\rightarrow\infty}\frac{u([b_1(nx)],[b_2(ny)])}{h(n)}=\lambda(x^{1/\alpha_1},y^{1/\alpha_2}).
\]
Since $b_1$ and $b_2$ are strictly increasing,  $u([b_1(x)],[b_2(y)])$ is eventually decreasing in $x$ and $y$.
A similar proof as in Theorem~\ref{emb1} shows that the 
function $g$ \sid{defined by}  $g(b_1(x),b_2(y)):=u([b_1(x)],[b_2(y)])$ \sid{satisfies}
 for all $x,y>0$,
\[
\lim_{t\rightarrow\infty}\frac{g(b_1(tx),b_2(ty))}{h(t)}=\lambda(x^{1/\alpha_1},y^{1/\alpha_2}).
\]
Then the scaling property of $\lambda$ follows immediately. Applying
\cite[Theorem~1.2.2]{O1989}
 again, we  conclude that $u$ can
be embedded in a function $g$ satisfying \eqref{cmrvns}.
\end{proof}

\sid{Assuming a probability measure has a mass function satisfying the
monotonicity condition, 
we make explicit the }
relation between regular variation of the probability measure
and regular variation of its pmf. 
The first part of Theorem \ref{meastodens}
 is the multivariate analogue of the
{\it monotone density theorem\/}.
\begin{theorem}\label{meastodens}
Suppose two non-negative integer valued random variables $(X,Y)$
\sid{have mass function} $ 
p(i,j)$ and $p$ is eventually decreasing in both arguments. 
For  $\alpha_1,\alpha_2>0$, assume further
 that $b_1(\cdot)\in RV_{1/\alpha_1}$, $b_2(\cdot)\in
RV_{1/\alpha_2}$ are strictly increasing, continuous scaling
functions.

\begin{enumerate}
\item {\sc (Regular variation of the measure implies regular variation
  of the mass function:)}
 If there exists a limit
measure $\nu\in\MM(\mathbb{R}_+^2\setminus \{\origin\})$ 
with density $\lambda(x,y)$, such that 
as $t\rightarrow\infty,$
\be\label{convmeas2}
 t\PP\left[\left(\frac{X}{b_1(t)},\frac{Y}{b_2(t)}\right)\in \cdot\right]\rightarrow \nu(\cdot),\qquad\mbox{in }\MM(\mathbb{R}_+^2\setminus \{\origin\}),
\ee
then the mass function $p(i,j)$ is regularly varying:
\be\label{convdens2}
\lim_{t\rightarrow\infty}\frac{p([b_1(t)x],[b_2(t)y])}{(tb_1(t)b_2(t))^{-1}}=\lambda(x,y),
\quad x,y>0.
\ee
\item {\sc (Regular variation of the mass function implies regular
    variation of the measure:)}
Conversely, if $p(i,j)$ satisfies \eqref{convdens2}, then the
distribution of $(X,Y)$ is regularly varying and \eqref{convmeas2}
holds with $\nu(dx,dy)=\lambda(x,y)dxdy.$
\end{enumerate}
\end{theorem}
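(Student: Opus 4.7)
The plan for part~1 (regular variation of the measure implies regular variation of the mass function---the multivariate monotone density theorem direction) is to recover the value $p([b_1(t)x],[b_2(t)y])$ by applying \eqref{convmeas2} to two small rectangles flanking $(x,y)$ and sandwiching the mass via the monotonicity hypothesis. Set $\phi(t):=1/(tb_1(t)b_2(t))$ and, for small $\epsilon>0$, let $R_\epsilon^-:=(x-\epsilon,x]\times(y-\epsilon,y]$ and $R_\epsilon^+:=(x,x+\epsilon]\times(y,y+\epsilon]$; each is a $\nu$-continuity set because $\nu$ has a Lebesgue density. Rewriting $t\PP[(X/b_1(t),Y/b_2(t))\in R_\epsilon^\pm]$ as $t\sum_{(i,j)\in A_t^\pm}p(i,j)$, where $A_t^\pm$ is the integer rectangle obtained by scaling $R_\epsilon^\pm$ by $(b_1(t),b_2(t))$ and contains $N_1(t)N_2(t)$ lattice points with $N_i(t)\sim b_i(t)\epsilon$, monotonicity supplies the sandwich
\[
N_1(t)N_2(t)\,p([b_1(t)x],[b_2(t)y])\le\sum_{A_t^-}p(i,j),\qquad\sum_{A_t^+}p(i,j)\le N_1(t)N_2(t)\,p([b_1(t)x],[b_2(t)y]),
\]
because $p$ attains its minimum on $A_t^-$ at the top-right corner $([b_1(t)x],[b_2(t)y])$, while on $A_t^+$ every $p$-value is bounded above by $p([b_1(t)x]+1,[b_2(t)y]+1)\le p([b_1(t)x],[b_2(t)y])$. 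Dividing by $\phi(t)\,tN_1(t)N_2(t)\sim\epsilon^2$, taking $\limsup$ and $\liminf$ as $t\to\infty$, and letting $\epsilon\to0$ at a continuity point of $\lambda$ squeezes both sides to $\lambda(x,y)$ and yields \eqref{convdens2}.

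For part~2 (mass function regular variation implies measure regular variation), the plan is to invoke Corollary~\ref{emb1cor} to embed $p$ in the eventually decreasing, regularly varying function $g(u,v):=p([u],[v])$ satisfying $g(b_1(t)s,b_2(t)r)/\phi(t)\to\lambda(s,r)$. It suffices to verify $\MM$-convergence on the $\pi$-system of bounded rectangles $A=(a,b]\times(c,d]$ with $0<a<b$ and $0<c<d$, since such sets are $\nu$-continuity sets bounded away from $\origin$ and determine $\MM$-convergence on $\cone$. Using $p(i,j)=\int_i^{i+1}\!\!\int_j^{j+1}g(u,v)\,du\,dv$ and the substitution $u=b_1(t)s,\,v=b_2(t)r$ together with $tb_1(t)b_2(t)=\phi(t)^{-1}$ rewrites $t\PP[(X/b_1(t),Y/b_2(t))\in A]$ as
\[
\int_{([b_1(t)a]+1)/b_1(t)}^{([b_1(t)b]+1)/b_1(t)}\int_{([b_2(t)c]+1)/b_2(t)}^{([b_2(t)d]+1)/b_2(t)}\frac{g(b_1(t)s,b_2(t)r)}{\phi(t)}\,ds\,dr,
\]
over a region converging to $A$ and with integrand tending pointwise to $\lambda(s,r)$. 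Monotonicity of $g$ provides the uniform dominant $g(b_1(t)s,b_2(t)r)/\phi(t)\le g(b_1(t)a,b_2(t)c)/\phi(t)\to\lambda(a,c)<\infty$ on $A$, so dominated convergence delivers $t\PP[\,\cdot\in A\,]\to\int\!\!\int_A\lambda(s,r)\,ds\,dr=\nu(A)$.

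The main obstacle lies in part~1: the sandwich argument tacitly requires $\lambda$ to be continuous at $(x,y)$, whereas \eqref{convmeas2} only supplies $\lambda$ as an a.e.-defined density. To handle this, the plan is to transfer the scaling property $\nu\circ T_s=s^{-1}\nu$ inherited from the regular variation (with $T_s(x,y):=(s^{1/\alpha_1}x,s^{1/\alpha_2}y)$) to the density, obtaining the homogeneity $\lambda(s^{1/\alpha_1}x,s^{1/\alpha_2}y)=s^{-1-1/\alpha_1-1/\alpha_2}\lambda(x,y)$ a.e.; a standard rescaling argument then produces a continuous representative of $\lambda$ on the interior of $\RR_+^2$. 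A secondary technical point in part~2 is upgrading convergence on the chosen $\pi$-system of rectangles to full $\MM$-convergence, which is handled by the Portmanteau-type characterization of $\MM$-convergence once these rectangles are verified to form a determining class of $\nu$-continuity sets on $\cone$.
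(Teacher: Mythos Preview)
Your argument is correct and runs along the same rails as the paper's, but the paper takes a shortcut: for part~(1) it first embeds $p$ in the piecewise-constant pdf $f(x,y)=p([x],[y])$ (Remark~\ref{rmk1}), transfers \eqref{convmeas2} to the law of a continuous vector $(X^*,Y^*)$ with density $f$, and then invokes the continuous monotone density theorem of de~Haan--Omey (1984, Theorem~2.3) as a black box to obtain regular variation of $f$, hence of $p$. For part~(2) it likewise embeds and cites de~Haan--Resnick (1979)/de~Haan--Omey (1984) for the integration step. Your proof unpacks exactly what those cited results do: the two-rectangle sandwich \emph{is} the de~Haan--Omey argument carried out discretely, and your dominated-convergence step on rectangles is the content of the de~Haan--Resnick direction. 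The paper's version is shorter once the continuous theory is granted; yours is self-contained.

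One point to tighten: your proposed resolution of the continuity obstacle in part~(1) is not quite right. The homogeneity relation $\lambda(s^{1/\alpha_1}x,s^{1/\alpha_2}y)=s^{-1-1/\alpha_1-1/\alpha_2}\lambda(x,y)$ (a.e.) does \emph{not} by itself force a continuous representative---a density of the form $\lambda_0\bigl((x,y)/\|\bw\|\bigr)\|\bw\|^{\rho}$ with $\lambda_0$ merely measurable on the sphere is homogeneous but need not be continuous. The clean fix is to note that your sandwich already gives
\[
\frac{\nu(R_\epsilon^+)}{\epsilon^2}\le\liminf_{t\to\infty}\frac{p([b_1(t)x],[b_2(t)y])}{\phi(t)}\le\limsup_{t\to\infty}\frac{p([b_1(t)x],[b_2(t)y])}{\phi(t)}\le\frac{\nu(R_\epsilon^-)}{\epsilon^2},
\]
and as $\epsilon\downarrow 0$ both outer terms converge to $\lambda(x,y)$ at every Lebesgue point of $\lambda$, hence for Lebesgue-a.e.\ $(x,y)$. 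Since $\lambda$ is a density it is only determined a.e.\ anyway, so this is the natural conclusion; the ``for all $x,y>0$'' in \eqref{convdens2} should be read with $\lambda$ taken to be this particular version (the limit itself). This is the same tacit convention hiding behind the paper's citation of de~Haan--Omey.
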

\begin{proof}
{(1) Extend the pmf $p(i,j)$ to an eventually decreasing
 pdf} $f(x,y)$ using the method in Remark~\ref{rmk1}, such that
$p(i,j)=f(i,j)$ for all $i,j\in\NN$. 
Let $(X^*,Y^*)$ have pdf $f(x,y)$, and \eqref{convmeas2} implies
\[ 
t\PP\left[\left(\frac{X^*}{b_1(t)},\frac{Y^*}{b_2(t)}\right)\in \cdot\right]\rightarrow \nu(\cdot),\qquad\mbox{as }t\rightarrow\infty.
\] 
By \cite[Theorem 2.3]{dHO1984}, we have
\[\lim_{t\rightarrow\infty}\frac{f(b_1(t)x,b_2(t)y)}{(tb_1(t)b_2(t))^{-1}}=\lambda(x,y).\]
Then \eqref{convdens2} can be recovered by noting that
$p([b_1(t)x],[b_2(t)y])=f(b_1(t)x,b_2(t)y)$ and $p(i,j)$ is embeddable
in $f(x,y)$ in the sense of \eqref{cmrv}.

(2) Since $p(i,j)$ satisfies \eqref{convdens2} and is eventually
monotone,  $p(i,j)$ can be embedded into a pdf, $f(x,y)$ which is also
eventually monotone.
Following the arguments in \wtd{\cite{dHR1979, dHO1984}, we have for
  all Borel sets $A\subseteq\RR_+^2\setminus\{\origin\}$ bounded away from
  $\origin$ such that $\int_A\lambda(x,y)\dd x\dd y<\infty$,  
\[
\lim_{t\rightarrow\infty}\int_A\frac{f(b_1(t)x,b_2(t)y)}{(tb_1(t)b_2(t))^{-1}}\dd x\dd y
        =\int_A\lambda(x,y)\dd x\dd y,
\]
and \eqref{convmeas2} then follows by the embeddability of
$p(i,j)$. }
\end{proof}

\subsection{Application to  the Preferential Attachment Model}\label{appl1}
\sid{As an example of the previous results, we treat 
 a special case of the preferential
attachment model}, where $c_1=c_2$, i.e. the joint regular variation of
the distribution of $(I,O)$ is standard. Due to the decomposition in
\eqref{jointIO}, it suffices to verify the monotonicity of the joint
mass function of $(X_j,Y_j)$, $j=1,2$. We will show that the joint
mass function of $(X_1,Y_1)$ is decreasing and the same argument also
works for $(X_2,Y_2)$.  Our approach requires the fact that the gamma
function $\Gamma(x)$ is log-convex and $(\log \Gamma)''>0$. See 
\cite[page 260, Section 6.4]{AS1972}. This means $f(x)=\log \Gamma(x)$
is convex, $f'(x)=\Gamma ' (x)/\Gamma(x)$ and $f''(x)=(\Gamma''
(x)\Gamma(x) -(\Gamma'(x))^2)/\Gamma(x))^2 >0.$

Recall Section \ref{model} and \eqref{rep} and \eqref{jointIO}. When
$c_1=c_2$, we have 
$(X_1,Y_1)=(T_{\lambda+1}(Z^{-1}),\tilde{T}_\mu(Z^{-1}))$. 
Set $q(i,j)=\PP[X_1=i,Y_1=j]$ and 
for $i,j\in \NN$, 
\begin{align}\label{qij}
q(i,j)&:= \PP[T_{\lambda+1}(Z^{-1} )=i, \tilde{T}_\mu(Z^{-1})=j]\nonumber\\
&= \frac{\Gamma(i+\lambda+1)}{\Gamma(\lambda+1)\Gamma(i+1)}\frac{\Gamma(j+\mu)}{\Gamma(\mu)\Gamma(j+1)}\cdot\frac{1}{c_1}
\int_1^\infty z^{-(2+1/c_1+\lambda+\mu)}(1-z^{-1})^{i+j}\dd z\nonumber\\
&=\Biggl(
  \frac{\Gamma(1+1/c_1+\lambda+\mu)}
{c_1\Gamma(\lambda+1)\Gamma(\mu)}\Biggr)\times 
  \frac{\Gamma(i+\lambda+1)/\Gamma(i+1)\cdot
  \Gamma(j+\mu)/\Gamma(j+1)}{\Gamma(i+j+1/c_1+\lambda+\mu+2)/\Gamma(i+j+1)}\nonumber\\ 
&=: C  \frac{\Gamma(i+\lambda+1)/\Gamma(i+1)\cdot \Gamma(j+\mu)/\Gamma(j+1)}{\Gamma(i+j+1/c_1+\lambda+\mu+2)/\Gamma(i+j+1)}.
\end{align}
Taking the log of $q(i,j)$, pretend $i$ is a continuous variable 
 and take the first  partial derivative with respect to $i$. This gives
\begin{align*}
\frac{\partial}{\partial i}\log q(i,j)&=\left[\frac{\Gamma'(i+\lambda+1)}{\Gamma(i+\lambda+1)}-\frac{\Gamma'(i+1)}{\Gamma(i+1)}\right]
-\left[\frac{\Gamma'(i+j+1/c_1+\lambda+\mu+2)}{\Gamma(i+j+1/c_1+\lambda+\mu+2)}-\frac{\Gamma'(i+j+1)}{\Gamma(i+j+1)}\right]\\
&= [f'(i+\lambda+1)-f'(i+1)]-[f'(i+j+1/c_1+\lambda+\mu+2)-f'(i+j+1)],
\end{align*}
where we used the notation $f=\log \Gamma$.
Since $f$ is convex,  $f'$ is increasing on $\RR_+$. For 
fixed $x>0$ and $a>0$,  $g(a):=f'(x+a)-f'(x)$, $a>0$, satisfies 
 $g'(a)=f''(x+a)>0$, 
and thus  $g(a)$ is increasing in $a$. Hence for $j\ge 0$,
\[f'(i+j+1/c_1+\lambda+\mu+2)-f'(i+j+1)\ge f'(i+1/c_1+\lambda+\mu+2)-f'(i+1),\]
which gives
\begin{align*}
\frac{\partial}{\partial i}\log q(i,j) & \le [f'(i+\lambda+1)-f'(i+1)]-[f'(i+1/c_1+\lambda+\mu+2)-f'(i+1)]\\
&= f'(i+\lambda+1)-f'(i+1/c_1+\lambda+\mu+2)<0.
\end{align*}
Therefore, $q(i,j)$, the joint mass function of $(X_1, Y_1)$,
 is decreasing in $i$. Analogously we can
 show that
 $q(i,j)$ is also decreasing in $j$. The monotonicity of the joint
 mass function $p(i,j)$ of $(I,O)$ defined in \eqref{pij} then
 follows from the decomposition in \eqref{jointIO}. Hence we are left
 to show \eqref{dmrv} for $p(i,j)$. 

Using \eqref{qij} with the constant $C$ restored, we have
\begin{align*}
q([nx],[ny])=\Biggl(
\frac{\Gamma(1+1/c_1+\lambda+\mu)}{c_1\Gamma(\lambda+1)\Gamma(\mu)}
\Biggr)
\times \frac{\Gamma([nx]+\lambda+1)/\Gamma([nx]+1)\cdot \Gamma([ny]+\mu)/\Gamma([ny]+1)}{\Gamma([nx]+[ny]+1/c_1+\lambda+\mu+2)/\Gamma([nx]+[ny]+1)}.
\end{align*}
Hence, for all $x,y>0$
\be\label{qlim}
\lim_{n\rightarrow\infty}\frac{q([nx],[ny])}{n^{-(2+1/c_1)}}=\frac{\Gamma(1+1/c_1+\lambda+\mu)}{c_1\Gamma(\lambda+1)\Gamma(\mu)}
\frac{x^\lambda y^{\mu-1}}{(x+y)^{1+\lambda+\mu+1/c_1}},
\ee
which shows that $q(i.j)$ is regularly varying. Similar calculations
can be done for $(X_2, Y_2)$ and  then using \eqref{jointIO}, we have for
all $x,y>0$, 
\begin{align}\label{limpij}
\lim_{n\rightarrow\infty}\frac{p([nx],[ny])}{n^{-(2+1/c_1)}}=\frac{\gamma}{\alpha+\gamma}&\frac{\Gamma(1+1/c_1+\lambda+\mu)}{c_1\Gamma(\lambda+1)\Gamma(\mu)}\frac{x^\lambda y^{\mu-1}}{(x+y)^{1+\lambda+\mu+1/c_1}}\nonumber\\
&+\frac{\alpha}{\alpha+\gamma}\frac{\Gamma(1+1/c_1+\lambda+\mu)}{c_1\Gamma(\lambda)\Gamma(\mu+1)}
\frac{x^{\lambda-1} y^\mu}{(x+y)^{1+\lambda+\mu+1/c_1}}.
\end{align}
Therefore, in the special case that $c_1=c_2$, 
 $p(i,j)$ is embeddable and standard regularly varying.

We also know from \cite[Theorem~2]{SRTDWW16} 
 that the joint distribution (as opposed to the mass function) of $(I,O)$ is regularly varying,
 and Theorem~\ref{meastodens} \sid{implies} that the
 joint mass function $p(i,j)$ must also be regularly varying in the
 sense of \eqref{convdens2}.  
{}From \eqref{limpij}, the limit function for
$p(i,j)$ is exactly the density for the limit measure specified in
\cite{SRTDWW16}. {We summarize:
\begin{proposition}
In the preferential attachment model with $c_1=c_2$, the asymptotic
joint mass function of  
in- and out-degrees $p(i,j)$ is eventually monotone, embeddable and
standard regularly varying and satisfies 
\eqref{limpij}.
\end{proposition}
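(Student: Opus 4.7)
The plan is to exploit the mixture representation in \eqref{jointIO}, which writes $(I,O)$ as a Bernoulli mixture of $(1+X_1,Y_1)$ and $(X_2,1+Y_2)$. Shifting a coordinate by one preserves both eventual monotonicity and the form of the regular-variation limit, so it suffices to establish the three properties (eventual monotonicity, the asymptotic ratio limit, embeddability) for the joint pmf of $(X_1,Y_1)$, repeat the argument for $(X_2,Y_2)$, and then add the two contributions weighted by $\gamma/(\alpha+\gamma)$ and $\alpha/(\alpha+\gamma)$.

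The first step is to obtain a workable closed form for $q(i,j):=\PP[X_1=i,Y_1=j]$. Since $c_1=c_2$ forces $a=1$ in \eqref{rep}, conditioning on the Pareto variable $Z$ makes both negative binomials share the success parameter $Z^{-1}$, and the joint pmf becomes a single integral against the Pareto density. The substitution $u=1-z^{-1}$ turns this integral into a Beta integral, yielding a product of two Gamma ratios of the form $\Gamma(i+\lambda+1)\Gamma(j+\mu)\Gamma(i+j+1)/\Gamma(i+j+1/c_1+\lambda+\mu+2)$ up to an $(i,j)$-free constant. This is the compact formula \eqref{qij}; once it is in hand, both the monotonicity check and the regular-variation limit become routine.

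For monotonicity I would treat $i$ as a continuous variable and differentiate $\log q(i,j)$. Writing $f=\log\Gamma$, the derivative splits as $[f'(i+\lambda+1)-f'(i+1)]-[f'(i+j+1/c_1+\lambda+\mu+2)-f'(i+j+1)]$. The key ingredient is the log-convexity of $\Gamma$: not only is $f'$ increasing, but the map $a\mapsto f'(x+a)-f'(x)$ is itself increasing in $x$ (because $f''>0$), so the second bracket dominates what it would be at $j=0$, and the sign of the difference reduces to one application of $f'$ increasing. This is the step where I expect to have to be most careful: writing the differences in the right order so that log-convexity gives the correct direction. Symmetry gives monotonicity in $j$, and the same argument applied to $q'(i,j):=\PP[X_2=i,Y_2=j]$ (parameters shifted in the other coordinate) together with the mixture representation transfers monotonicity to $p(i,j)$.

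For the asymptotics, substituting $(i,j)=([nx],[ny])$ in the closed form and applying $\Gamma(m+a)/\Gamma(m+b)\sim m^{a-b}$ to each of the four Gamma ratios produces the limit \eqref{qlim}, in which the scaling is $h(n)=n^{-(2+1/c_1)}\in RV_{-(2+1/c_1)}$ and the limit function is strictly positive on $(0,\infty)^2$. The analogous computation for $q'$ and the mixture weights yield \eqref{limpij}, so $p(i,j)$ satisfies the discrete regular-variation hypothesis \eqref{dmrv} with $\rho=-(2+1/c_1)$. With monotonicity and \eqref{dmrv} in hand, Theorem \ref{emb1} applies directly and gives embeddability into a regularly varying pdf on $\RR_+^2$; Theorem \ref{meastodens}(2) then recovers, as a consistency check, the measure-level regular variation already known from \cite{SRTDWW16}, and identifies the limit density with the one read off from \eqref{limpij}. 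This completes all the claims of the proposition.
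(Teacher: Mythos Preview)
Your proposal is correct and follows essentially the same route as the paper: derive the closed form \eqref{qij} via the Beta integral, check monotonicity by differentiating $\log q$ and invoking log-convexity of $\Gamma$, read off \eqref{qlim} from Stirling-type asymptotics of the Gamma ratios, combine the two mixture components to get \eqref{limpij}, and then feed monotonicity plus \eqref{dmrv} into Theorem~\ref{emb1}. One small wording slip: when you say ``the map $a\mapsto f'(x+a)-f'(x)$ is itself increasing in $x$ (because $f''>0$)'', what you need and what the paper actually uses is that this map is increasing in $a$ for fixed $x$ (which does follow from $f''>0$); the step comparing the second bracket at $j$ to its value at $j=0$ is exactly the paper's argument.
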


When $c_1 \neq c_2$, we have not succeeded in demonstrating 
 the monotonicity condition for the mass function. Instead, in the
 next section, 
we will give a different sufficient condition which can be used to verify the regular
variation of the joint mass function.
}

\section{Embeddability and Convergence on the Unit Sphere.}\label{unif}
\sid{In  Section \ref{monot},} embeddability of a mass function is
guaranteed by assuming monotonicity; however, sometimes monotonicity is either
 not applicable or difficult to verify.  An alternate approach
\sid{is to fix a norm $\|\cdot\|$} and suppose regular variation on
$$\aleph_0:=\{\textbf{v}\in \RR_+^2: ||\textbf{v}||=1\},$$
 the unit
sphere \sid{in this norm} relative to the origin,
with respect to a continuous variable.
This is sufficient for embeddability but further uniformity and
boundedness conditions are necessary to relate pmf's, pdf's and their measures.

\begin{theorem}[Standard Case]\label{emb2}
  Suppose $h(\cdot)\in
  RV_\wtd{\rho}$, ${\rho}<0$ and $u:\ZZ_+^2 \mapsto\RR_+$ satisfies:
there exists a  limit function $\lambda_0 >0$ defined on
$\aleph_0$ such that 
\be\label{pointwise}
\lim_{t\rightarrow\infty}\frac{u([tx],[ty])}{h(t)}
=\lambda_0(x,y),\quad \forall (x,y)\in \aleph_0.
\ee
Then
\begin{enumerate}
\item The doubly indexed function $u(i,j)$ is regularly varying:
 For all $x,y>0$   
\be\label{mrv}\lim_{n\rightarrow\infty}\frac{u([nx],[ny])}{h(n)}=\lambda(x,y):=
\lambda_0 \Bigl( \frac{(x,y)}{\|(x,y)\|} \Bigr) \|(x,y)\|^\rho >0;\ee

\item  The doubly indexed function 
$u(i,j )$ is embeddable in a \sid{regularly varying} function
  $f:\RR_+^2\mapsto\RR$ with limit function $\lambda(\cdot)$ such that $f(x,y)=u([x],[y])$;
\item If \eqref{pointwise} is uniform on $\aleph_0$, then $f$ also  satisfies
\be\label{unif_f}
\lim_{t\rightarrow\infty}\sup_{(x,y)\in\aleph_0}\left|\frac{f(tx,ty)}{h(t)}-\lambda_0(x,y)\right|=0.
\ee
\end{enumerate}
\end{theorem}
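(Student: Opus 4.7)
The plan is to reduce every claim to a combination of the spherical assumption \eqref{pointwise} and the scalar regular variation of $h$, via the polar decomposition $(x,y) = r(x_0,y_0)$ with $r = \|(x,y)\|$ and $(x_0,y_0) \in \aleph_0$. The key algebraic fact is that the identities $[nx] = [(nr)x_0]$ and $[ny] = [(nr)y_0]$ are exact, not approximate, so splitting the radial and spherical contributions loses nothing and no auxiliary regularity on the floor function is needed.

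For Part 1, I would write
\[
\frac{u([nx],[ny])}{h(n)} = \frac{u([(nr)x_0],[(nr)y_0])}{h(nr)} \cdot \frac{h(nr)}{h(n)}.
\]
The first factor tends to $\lambda_0(x_0,y_0)$ by \eqref{pointwise} applied along $t = nr \to \infty$, and the second tends to $r^\rho$ since $h \in RV_\rho$. Multiplying gives $\lambda_0\bigl((x,y)/\|(x,y)\|\bigr)\,\|(x,y)\|^\rho = \lambda(x,y)$, which is \eqref{mrv}. Strict positivity of $\lambda$ is inherited from that of $\lambda_0$ together with $r^\rho > 0$.

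For Part 2, I would simply set $f(x,y) := u([x],[y])$. The decisive observation is that the variable $t$ in \eqref{pointwise} is already continuous, so the identical decomposition used in Part 1 applies verbatim with $n$ replaced by real $t \to \infty$, yielding $f(tx,ty)/h(t) = u([tx],[ty])/h(t) \to \lambda(x,y)$ for all $x,y>0$. Hence $u$ is embeddable in the regularly varying function $f$. For Part 3, on $\aleph_0$ one has $\|(x,y)\|=1$ and therefore $\lambda(x,y) = \lambda_0(x,y)$, so \eqref{unif_f} reduces to
\[
\sup_{(x,y)\in\aleph_0}\left|\frac{u([tx],[ty])}{h(t)} - \lambda_0(x,y)\right| \to 0,
\]
which is a direct restatement of the hypothesized uniform form of \eqref{pointwise}.

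There is no essential obstacle: all three conclusions are a straightforward packaging of \eqref{pointwise} and $h \in RV_\rho$ through polar coordinates. The only point worth emphasizing is that Part 2 is so immediate precisely because \eqref{pointwise} is stated with a continuous argument; had only integer convergence on $\aleph_0$ been available, one would have to interpolate between $[t]$ and $[t]+1$ by a squeezing argument analogous to the one used in Theorem \ref{emb1}, together with $h([t]+1)/h([t]) \to 1$, in order to pass from an integer to a continuous scaling variable.
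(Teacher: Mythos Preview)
Your proof is correct and follows essentially the same route as the paper: both arguments hinge on the polar decomposition $(x,y)=\|(x,y)\|\cdot \ba$ with $\ba\in\aleph_0$, factor $u([t\bv])/h(t)$ as $\bigl(u([t\|\bv\|\ba])/h(t\|\bv\|)\bigr)\cdot\bigl(h(t\|\bv\|)/h(t)\bigr)$, and read off the limit from \eqref{pointwise} and $h\in RV_\rho$; Parts~2 and~3 then drop out immediately from the continuity of $t$ in \eqref{pointwise} and the definition $f(\bv)=u([\bv])$. Your explicit remark that $[nx]=[(nr)x_0]$ is an exact identity is a helpful clarification of the one step the paper leaves implicit.
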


\sid{Note if a pdf satisfies \eqref{unif_f} with $\lambda_0$ positive and
bounded on $\aleph_0$, then also the measure
is regularly varying. See \cite{dHR1987,resnickbook:2008}. }

\begin{proof}
Write $\bv:=(x,y)$, $u([t\bv]):=u([tx],[ty])$, and for $\bv$ such that
$\|\bv\|\neq 0$, we have $\ba:={\bv}/{\|\bv\|}\in\aleph_0$.  We
show that convergence on $\aleph_0$ implies convergence everywhere. 
We have for $\bv =\|\bv\|\cdot \ba \neq \origin$, 
\begin{align*}
\frac{u([t\bv])}{h(t)} =& \frac{u([t\|v\|\cdot \ba])}{h(t)}
=\frac{u([t\|v\|\cdot \ba])}{h(t\|v\|)}
\cdot
\frac{h(t\|v\|)}{h(t)} \to \lambda_0(\ba) \|\bv\|^\rho.
\end{align*}
This gives \eqref{mrv} and regular variation of $f$ follows
directly. If \eqref{pointwise} is uniform on $\aleph_0$ then by
definition of $f(\bv)=u([\bv])$, \eqref{unif_f} is true.
\end{proof}

\sid{There are various possibile ways to extend this result to the
  non-standard case, depending on the purpose in mind. The following
  is crafted with Section \ref{appl2} in mind.}

\begin{corollary}[Non-standard case; power law scaling]\label{emb2cor}
  Suppose $h(\cdot)\in
  RV_\rho$, $\rho<0$,  $u:\ZZ_+^2 \mapsto\RR_+$  and scaling functions
  are power laws; i.e., $b_i(t) =t^{1/\alpha_i},\,i=1,2$. If
there exists  a  limit function $\lambda_0 >0$ defined on
$$\mathcal{E}_0:=\{(x,y): (x^{\alpha_1},y^{\alpha_2})\in\aleph_0\}$$
 such that 
\be\label{pointwisens}
\lim_{t\rightarrow\infty}
\frac{u([t^{1/\alpha_1} x],[t^{1/\alpha_2} y])}{h(t)}
=\lambda_0(x,y), \quad  \forall (x,y)\in \mathcal{E}_0,
\ee
then
\begin{enumerate}
\item The doubly indexed function $u(i,j)$ is regularly varying:
 For all $x,y>0$, define
 $\bw=\bw(x,y):=(x^{\alpha_1},y^{\alpha_2})$ and
\be\label{mrv_ns}\lim_{n\rightarrow\infty}\frac{u([n^{1/\alpha_1}
  x],[n^{1/\alpha_2} y])}{h(n)}=\lambda(x,y):= 
\lambda_0 \Bigl( \frac{x}{\|\bw\|^{1/\alpha_1}},\frac{y}{\|\bw\|^{1/\alpha_2}} \Bigr) \|\bw\|^\rho >0;\ee

\item  The doubly indexed function $u(i,j )$ is embeddable in a non-standard regularly varying function
  $f:\RR_+^2\mapsto\RR$ with limit function $\lambda(\cdot)$ such that $f(x,y)=u([x],[y])$;
\item If convergence in \eqref{pointwisens} is uniform on
  $\mathcal{E}_0$, then also,
\be\label{unif_f_ns}
\lim_{t\rightarrow\infty}\sup_{(x,y)\in\mathcal{E}_0}\left|\frac{f(t^{1/\alpha_1} x,t^{1/\alpha_2} y)}{h(t)}-\lambda_0(x,y)\right|=0.
\ee
\end{enumerate}
\end{corollary}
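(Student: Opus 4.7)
The plan is to mirror the proof of Theorem~\ref{emb2} with a non-standard ``polar'' decomposition matched to the scaling exponents $(\alpha_1,\alpha_2)$. Given $(x,y)\in\RR_+^2\setminus\{\origin\}$, set $\bw:=(x^{\alpha_1},y^{\alpha_2})$ and
\[
a_1:=\frac{x}{\|\bw\|^{1/\alpha_1}},\qquad a_2:=\frac{y}{\|\bw\|^{1/\alpha_2}},
\]
so that $(x,y)=(\|\bw\|^{1/\alpha_1}a_1,\|\bw\|^{1/\alpha_2}a_2)$. A one-line check shows $(a_1^{\alpha_1},a_2^{\alpha_2})=\bw/\|\bw\|$, which has unit norm, so $(a_1,a_2)\in\mathcal{E}_0$, exactly where \eqref{pointwisens} is assumed.

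For part~(1), I would split the ratio as
\begin{equation*}
\frac{u([n^{1/\alpha_1}x],[n^{1/\alpha_2}y])}{h(n)}
=\frac{u([(n\|\bw\|)^{1/\alpha_1}a_1],[(n\|\bw\|)^{1/\alpha_2}a_2])}{h(n\|\bw\|)}
\cdot\frac{h(n\|\bw\|)}{h(n)},
\end{equation*}
using the identities $(n\|\bw\|)^{1/\alpha_i}a_i=n^{1/\alpha_i}\cdot\|\bw\|^{1/\alpha_i}\cdot a_i$ which recover $n^{1/\alpha_1}x$ and $n^{1/\alpha_2}y$ respectively. Applying \eqref{pointwisens} with $t=n\|\bw\|\to\infty$ and $(a_1,a_2)\in\mathcal{E}_0$ sends the first factor to $\lambda_0(a_1,a_2)$, while $h\in RV_\rho$ sends the second to $\|\bw\|^\rho$. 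The product is exactly $\lambda(x,y)$ as displayed in \eqref{mrv_ns}, and positivity is inherited from $\lambda_0>0$.

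Parts~(2) and~(3) then drop out with essentially no additional work. Setting $f(x,y):=u([x],[y])$ makes $f$ agree with $u$ on $\ZZ_+^2$, and part~(1) is precisely the non-standard regular variation relation $\lim_{t\to\infty}f(t^{1/\alpha_1}x,t^{1/\alpha_2}y)/h(t)=\lambda(x,y)$, so $u$ is embeddable in $f$. For part~(3), the equality $f(t^{1/\alpha_1}x,t^{1/\alpha_2}y)=u([t^{1/\alpha_1}x],[t^{1/\alpha_2}y])$ is identically true; taking the supremum over $(x,y)\in\mathcal{E}_0$ therefore identifies \eqref{unif_f_ns} with the uniform version of the hypothesis on $\mathcal{E}_0$, and the conclusion follows immediately.

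I do not expect a substantive obstacle. The whole argument is algebraic: express $(x,y)$ via a point on the non-standard sphere $\mathcal{E}_0$, absorb $\|\bw\|$ into the argument of $h$, and use univariate regular variation of $h$. The floor functions cause no trouble, since after the decomposition each floor argument is identically $n^{1/\alpha_i}$ times $x$ or $y$, so no new roundings are introduced. The only item genuinely requiring care is verifying that the point $(a_1,a_2)$ lies in $\mathcal{E}_0$ so that the hypothesis is applicable, and that the resulting product $\lambda_0(a_1,a_2)\|\bw\|^\rho$ coincides with the formula claimed for $\lambda(x,y)$ in \eqref{mrv_ns}.
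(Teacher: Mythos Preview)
Your proposal is correct and follows essentially the same approach as the paper: define $\bw=(x^{\alpha_1},y^{\alpha_2})$, project $(x,y)$ onto $\mathcal{E}_0$ via $(x/\|\bw\|^{1/\alpha_1},y/\|\bw\|^{1/\alpha_2})$, absorb $\|\bw\|$ into the scaling variable, and use \eqref{pointwisens} together with $h\in RV_\rho$ to obtain \eqref{mrv_ns}; parts~(2) and~(3) are then dispatched exactly as you indicate. Your write-up is in fact slightly more explicit than the paper's in checking that $(a_1,a_2)\in\mathcal{E}_0$ and in noting that no new floor terms arise.
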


\begin{remark}\label{bigmouth}
 {\rm
 If a \sid{regularly varying} pdf $f(x,y)$ on $\mathbb{R}_+^2$ satisfies \eqref{unif_f_ns} with $\lambda_0$ positive and
bounded on $\mathcal{E}_0$, then also the measure 
\sid{induced by the pdf} 
is regularly varying. If a pmf $u(i,j)$ satisfies \eqref{pointwisens}
with $\lambda$ positive and bounded on $\mathcal{E}_0$, then the
corresponding discretely supported measure is regularly varying. See
\cite{dHR1987,resnickbook:2008} and the comments after the proof of
Corollary \ref{emb2cor}.
}
\end{remark}

\begin{proof}
Note that for all $(x,y)\neq\origin$, the function $\bw$ creates a map onto
$\mathcal{E}_0$ and
$$(x,y) \mapsto \Bigl( \frac{x}{\|\bw\|^{1/\alpha_1}},\frac{y}{\|\bw\|^{1/\alpha_2}}
\Bigr)\in\mathcal{E}_0.$$
We show that convergence on $\mathcal{E}_0$ implies convergence
for all $x,y>0$: For $x,y >0$,
\begin{align*}
\lim_{t\rightarrow\infty}\frac{u([t^{1/\alpha_1} x],[t^{1/\alpha_2} y])}{h(t)}
&= \lim_{t\rightarrow\infty}\frac{u([t^{1/\alpha_1} \|\bw\|^{1/\alpha_1}\cdot (x/\|\bw\|^{1/\alpha_1})],
        [t^{1/\alpha_2} \|\bw\|^{1/\alpha_2}\cdot
  (y/\|\bw\|^{1/\alpha_2})])}{h(t)}\\
&=\lim_{t\rightarrow\infty}\frac{u(
[(t\|\bw\|)^{1/\alpha_1} x/\|\bw\|^{1/\alpha_1}],
        [ (t\|\bw\|)^{1/\alpha_2}   y/\|\bw\|^{1/\alpha_2}]}{h(t\|\bw\|)}\cdot\frac{h(t\|\bw\|)}{h(t)}\\
&=\lambda_0 \Bigl( \frac{x}{\|\bw\|^{1/\alpha_1}},\frac{y}{\|\bw\|^{1/\alpha_2}} \Bigr) \|\bw\|^\rho,
\end{align*}
which verifies \eqref{mrv_ns}. The embeddability and \eqref{unif_f_ns} follow by a similar
 argument as in the proof of Theorem~\ref{emb2}.
\end{proof}

For Remark \ref{bigmouth}: If we assume $f$ is regularly varying,
$\lambda_0$ is bounded on $\mathcal{E}_0$ and \eqref{unif_f_ns},
it is straightforward 
 to generalize results in \cite[Theorem~2.1]{dHR1987}:   For $(x,y)\in
 A$, where $A$ is a Borel set bounded away from $\origin$, 
 find an integrable bound for 
\[\frac{f((t\|\bw\|)^{1/\alpha_1} \|\bw\|^{-1/\alpha_1}x,(t\|\bw\|)^{1/\alpha_2}\|\bw\|^{-1/\alpha_2}y)}{h(t\|\bw\|)}\cdot\frac{h(t\|\bw\|)}{h(t)},\]
using the boundedness of $\lambda_0$ on $\mathcal{E}_0$ to bound the first term and Karamata's 
representation to bound the second (same as in the proof of \cite[Theorem~2.1]{dHR1987}).
Then the convergence of the associated measure follows from dominated convergence.

\subsection{Application to the Preferential Attachment Model.}\label{appl2}
We \sid{now apply} Corollary~\ref{emb2cor} to show that in
the  case where $c_1\neq c_2$, the 
joint pmf of $(I,O)$ is also regularly varying. 
The following lemma \sid{is a variant of Stirling's formula.}
\begin{lemma}\label{gunif}
For \sid{a} compact set $K\subset \sid{(0,\infty)}$ and \sid{$0<k\neq 1$}, we have 
\[\lim_{t\rightarrow\infty}\sup_{x\in K}
\left|\frac{\Gamma(tx+k)}{t^k\Gamma(tx)}-x^k\right|=0.\] 
\end{lemma}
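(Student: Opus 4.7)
The plan is to reduce the claim to a pointwise Stirling-type asymptotic together with a compactness argument, so the only real content is keeping track of where uniformity in $x$ enters. The key identity is the algebraic factorization
\[
\frac{\Gamma(tx+k)}{t^k\,\Gamma(tx)}-x^k
= x^k\left(\frac{\Gamma(tx+k)}{(tx)^k\,\Gamma(tx)}-1\right),
\]
so it suffices to control the quantity $\varepsilon(z):=\Gamma(z+k)/(z^k\Gamma(z))-1$ for $z=tx$ as $t\to\infty$ uniformly in $x\in K$.

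First I would recall the standard consequence of Stirling's formula,
\[
\log\Gamma(z) = \bigl(z-\tfrac12\bigr)\log z - z + \tfrac12\log(2\pi) + O(1/z)
\quad (z\to\infty),
\]
from which a short expansion of $\log\Gamma(z+k)-\log\Gamma(z) - k\log z$ in powers of $1/z$ gives $\varepsilon(z)\to 0$ as $z\to\infty$. Crucially, this convergence is a statement about the single variable $z\to\infty$, so for every $\eta>0$ there exists $z_0=z_0(\eta,k)$ with $|\varepsilon(z)|<\eta$ whenever $z\ge z_0$.

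Next, since $K$ is a compact subset of $(0,\infty)$, there exist constants $0<a\le b<\infty$ with $K\subseteq[a,b]$. For $t\ge z_0/a$ and every $x\in K$ we have $tx\ge ta\ge z_0$, hence $|\varepsilon(tx)|<\eta$ uniformly in $x\in K$. Combining this with the bound $x^k\le b^k$ gives
\[
\sup_{x\in K}\left|\frac{\Gamma(tx+k)}{t^k\,\Gamma(tx)}-x^k\right|
\;\le\; b^k\sup_{x\in K}|\varepsilon(tx)|
\;\le\; b^k\eta,
\]
and letting $\eta\downarrow 0$ completes the proof.

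There is no substantive obstacle: everything is driven by Stirling's formula and the fact that $\inf_{x\in K} x>0$ on a compact $K\subset(0,\infty)$, which lets a single $z_0$ serve every $x\in K$ once $t$ is large. The excluded value $k=1$ plays no role in the argument (the identity $\Gamma(z+1)=z\Gamma(z)$ in fact makes that case trivial); the hypothesis is presumably there only because the lemma is invoked later with $k\ne 1$.
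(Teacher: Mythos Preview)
Your proof is correct and takes a genuinely different route from the paper's. The paper establishes pointwise convergence $f_t(x):=\Gamma(tx+k)/(t^k\Gamma(tx))\to x^k$ via Stirling, then invokes a monotonicity result of Qi--Chen--Niu stating that $t\mapsto \log f_t(x)$ is increasing when $k\in(0,1)$ and decreasing when $k>1$; this monotonicity, together with continuity of the limit, allows Dini's theorem to upgrade pointwise to uniform convergence on $K$. Your argument instead exploits the factorization $f_t(x)-x^k = x^k\,\varepsilon(tx)$ with $\varepsilon(z)=\Gamma(z+k)/(z^k\Gamma(z))-1$, reducing everything to the single-variable limit $\varepsilon(z)\to 0$; uniformity in $x\in K$ then falls out immediately from $\inf_{x\in K}x>0$. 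Your route is more elementary---no Dini, no external monotonicity lemma---and as you note, it does not need the hypothesis $k\neq 1$. The paper's approach, by contrast, explains why that hypothesis appears: it is used to pin down the direction of monotonicity needed for Dini.
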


\begin{proof}
Define $f_t(x)=\frac{\Gamma(tx+k)}{t^k\Gamma(tx)}$. 
By Stirling's formula \cite[p. 254]{AS1972}, 
we have  pointwise convergence:
\be\label{ptwise}\lim_{t\rightarrow\infty}f_t(x)=x^k,\quad \sid{x>0.}\ee
According to \wtd{\cite[Theorem~4]{QCN2006}} or \cite[Theorem
1.3]{QCN2008}, \sid{for $x>0$,}  $\log f_t(x)$ is increasing in $t$ if $k\in(0,1)$ and decreasing in $t$ if $k>1$. 
Either case will allow us to apply Dini's theorem to the pointwise convergence in \eqref{ptwise} to conclude the uniform convergence of $f_t$ on K.
\end{proof}

\sid{Now we show regular variation of $p(i,j)$ for the preferential
 attachment model in the nonstandard case where $c_1\neq c_2$.}
Here we still only detail the calculations for $(X_1, Y_1)$; results
for $(X_2, Y_2)$ can be obtained in a similar way. 

Set $a=c_2/c_1$, $b_i(t)=t^{c_i}$.  We have
\begin{align*}
q([t^{c_1}x],[t^{c_2} y])=&\frac{\Gamma([t^{c_1}
                            x]+\lambda+1)}{\Gamma(\lambda+1)\Gamma([t^{c_1}x]+1)}\frac{\Gamma([t^{c_2}
                            y]+\mu)}{\Gamma(\mu)\Gamma([t^{c_2}y]+1)} \sid{\frac{1}{c_1}}\times\\
&\times\int_1^\infty s^{-(1+1/c_1)}(1-s^{-1})^{[t^{c_1}x]}s^{-(\lambda+1)}(1-s^{-a})^{[t^{c_2}y]}s^{-a\mu}\dd s.
\end{align*}
It suffices to assume according to Corollary \ref{emb2cor} that
$(x,y)\in \mathcal{E}_0$.
Making the change of variable \sid{$z=s/t^{c_1}$}, we rewrite the integral as
\be\label{integral}
t^{-(1+c_1(\lambda+1)+c_2\mu)}\int_{1/t^{c_1}}^\infty z^{-(2+1/c_1+\lambda+a\mu)}\left(1-\frac{z^{-1}}{t^{c_1}}\right)^{[t^{c_1}x]}
\left(1-\frac{z^{-a}}{t^{c_2}}\right)^{[t^{c_2}y]}\dd z.
\ee
\wtd{Note that for any $(x,y)$ on the compact set $\mathcal{E}_0$ defined in Corollary~\ref{emb2cor},
dominated convergence gives the convergence of the integral in \eqref{integral} to 
\[\int_0^\infty z^{-(2+1/c_1+\lambda+a\mu)}e^{-\left(\frac{x}{z}+\frac{y}{z^a}\right)}\dd z,\]
as $t\rightarrow\infty$.}
Therefore, according to Corollary \ref{emb2cor}
we have for all $x,y>0$,
\be\label{limitqijns}
\lim_{n\rightarrow\infty}\frac{q([n^{c_1}x],[n^{c_2}y])}{n^{-(1+c_1+c_2)}}
= \frac{x^\lambda y^{\mu-1}}{c_1\Gamma(\lambda+1)\Gamma(\mu)}\int_0^\infty z^{-(2+1/c_1+\lambda+a\mu)}e^{-\left(\frac{x}{z}+\frac{y}{z^a}\right)}\dd z,
\ee
which shows that $q(i,j)$ is regularly varying. Applying
\eqref{jointIO} again, we
 conclude,
\begin{align}\label{limitpijns}
\lim_{n\rightarrow\infty}\frac{p([n^{c_1}x],[n^{c_2}y])}{n^{-(1+c_1+c_2)}}
= \frac{\gamma}{\alpha+\gamma}&\frac{x^\lambda y^{\mu-1}}{c_1\Gamma(\lambda+1)\Gamma(\mu)}\int_0^\infty z^{-(2+1/c_1+\lambda+a\mu)}e^{-\left(\frac{x}{z}+\frac{y}{z^a}\right)}\dd z\nonumber\\
&+\frac{\alpha}{\alpha+\gamma}\frac{x^{\lambda-1} y^\mu}{c_1\Gamma(\lambda)\Gamma(\mu+1)}\int_0^\infty z^{-(1+a+1/c_1+\lambda+a\mu)}e^{-\left(\frac{x}{z}+\frac{y}{z^a}\right)}\dd z.
\end{align}

Therefore,  $p(i,j)$ is regularly varying with scaling functions $b_i(t)=t^{c_i}$ for $i=1,2$ and limit function as in \eqref{limitpijns}, which is the density of the limit measure given in \wtd{\cite{SRTDWW16}. We summarize:
\begin{proposition}
In a preferential attachment model where $c_1\neq c_2$, the joint mass function of 
in- and out-degrees $p(i,j)$ is embeddable, nonstandard regularly varying and satisfies
\eqref{limitpijns}.
\end{proposition}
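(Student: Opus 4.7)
The plan is to invoke Corollary~\ref{emb2cor} with $\alpha_i=1/c_i$, scaling functions $b_i(t)=t^{c_i}$, and $h(t)=t^{-(1+c_1+c_2)}\in RV_{-(1+c_1+c_2)}$. Once I verify \eqref{pointwisens} for $p(i,j)$ on $\mathcal{E}_0$ with a strictly positive limit $\lambda_0$, the corollary delivers embeddability and nonstandard regular variation, and the scaling identity \eqref{mrv_ns} extends $\lambda_0$ off $\mathcal{E}_0$ to produce \eqref{limitpijns}.

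First, I use the decomposition \eqref{jointIO} to split $p(i,j)$ into a convex combination of $\PP[X_1=i-1,Y_1=j]$ and $\PP[X_2=i,Y_2=j-1]$ with weights $\gamma/(\alpha+\gamma)$ and $\alpha/(\alpha+\gamma)$. The two summands are handled symmetrically, so I focus on $q(i,j):=\PP[X_1=i,Y_1=j]$. Conditioning on the Pareto variable $Z$ from \eqref{rep} and integrating out the two independent negative binomials yields the closed form
\[
q(i,j)=\frac{1}{c_1}\cdot\frac{\Gamma(i+\lambda+1)}{\Gamma(\lambda+1)\Gamma(i+1)}\cdot\frac{\Gamma(j+\mu)}{\Gamma(\mu)\Gamma(j+1)}\int_1^\infty s^{-(2+1/c_1+\lambda+a\mu)}(1-s^{-1})^i(1-s^{-a})^j\,ds.
\]
Substituting $(i,j)=([t^{c_1}x],[t^{c_2}y])$ and changing variables $z=s/t^{c_1}$ extracts a factor $t^{-(1+c_1(\lambda+1)+c_2\mu)}$ from the integral. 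Combined with the Gamma-ratio asymptotics supplied by Lemma~\ref{gunif} (uniform on compact subsets of $(0,\infty)$), the total prefactor matches $h(t)=t^{-(1+c_1+c_2)}$.

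The main hurdle is evaluating the limit of the remaining integral uniformly in $(x,y)\in\mathcal{E}_0$. Pointwise in $z$, the integrand converges to $z^{-(2+1/c_1+\lambda+a\mu)}\exp(-x/z-y/z^a)$ via the familiar $(1-c/n)^n\to e^{-c}$. To apply dominated convergence I use the inequality $(1-u)^N\le e^{-Nu}$ on $[0,1]$ to bound the integrand by the $t$-independent envelope $z^{-(2+1/c_1+\lambda+a\mu)}\exp\bigl(-c_*(1/z+1/z^a)\bigr)$, where $c_*>0$ is a uniform lower bound for $\min(x,y)$ on the relevant compact piece of $\mathcal{E}_0$ (bounded away from the coordinate axes, where the claimed limit is positive). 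This envelope is integrable on $(0,\infty)$ because the exponential kills the singularity at $z=0$ and the polynomial decays at infinity.

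Combining the integral limit with the Gamma-ratio asymptotics and restoring the overall constant produces \eqref{limitqijns}. The symmetric computation for $(X_2,Y_2)$ and the weighted sum prescribed by \eqref{jointIO} then yield \eqref{limitpijns}. Positivity of $\lambda_0$ is visible from the formula, so Corollary~\ref{emb2cor} closes the argument, simultaneously delivering embeddability, nonstandard regular variation, and the explicit limit.
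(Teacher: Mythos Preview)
Your proposal is correct and follows essentially the same route as the paper: decompose via \eqref{jointIO}, write out the mixed negative-binomial/Pareto integral for $q(i,j)$, substitute $(i,j)=([t^{c_1}x],[t^{c_2}y])$, change variables $z=s/t^{c_1}$, and pass to the limit by dominated convergence together with the Stirling-type Gamma-ratio asymptotics, then invoke Corollary~\ref{emb2cor}. You are in fact slightly more explicit than the paper in two places---you spell out the index shift $(i-1,j)$, $(i,j-1)$ coming from \eqref{jointIO} and you exhibit a concrete integrable envelope via $(1-u)^N\le e^{-Nu}$---whereas the paper simply asserts dominated convergence.
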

}

\wtd{
Using Lemma~\ref{gunif}, we can also conclude that the convergence in
\eqref{limitqijns} is uniform on $\mathcal{E}_0$ and so is \eqref{limitpijns}.
The limit function specified in \eqref{limitpijns} is positive and bounded on $\mathcal{E}_0$.
 Hence, the results in Corollary~\ref{emb2cor}(3) and
 Remark~\ref{bigmouth} are also applicable here
}\sid{ and allow the conclusion that regular variation of the pmf
  $p(i,j)$ implies regular variation of the associated measure proven
  in \cite{RS15,SRTDWW16}}

\bibliographystyle{plain}
\nocite{*}
\bibliography{bib-DiscreteMRV.v5.bib}

\end{document}